\newtheorem{Theorem}{Theorem}%[section]
\newtheorem{Proposition}[Theorem]{Proposition}
\newtheorem{Lemma}[Theorem]{Lemma}
\newcommand\EE{{E}}
\newcommand\NN{\mathbb{N}}
\newcommand\PP{{P}}
\newcommand\RR{\mathbb{R}}
\newcommand\ZZ{\mathbb{Z}}
\newcommand\one{\mathbbm{1}} %indicator/characteristics 
\newcommand{\tA}{{\widetilde A}}
\newcommand{\tE}{{\widetilde \EE}}
\newcommand{\tK}{{\widetilde K}}
\newcommand{\tT}{{\widetilde T}}
\newcommand{\tb}{{\tilde b}}
\newcommand{\tmu}{{\tilde \mu}}
\newcommand{\tpi}{{\widetilde \pi}}
\newcommand{\tr}{{\tilde r}}
\newcommand{\tsfX}{\widetilde\sfX}
\newcommand{\tik}{\check k}
\newcommand{\sfX}{\mathsf{X}}
\newcommand{\sfY}{\mathsf{Y}}
\newcommand{\sfZ}{\mathsf{Z}}
\newcommand{\sfx}{\mathsf{x}}
\newcommand{\sfz}{\mathsf{z}}
\newcommand{\hnu}{\hat{\nu}}
\newcommand{\tmax}{\textstyle{\max}}
\newcommand{\cX}{\mathcal X}
\newcommand\eps{\varepsilon}
\newcommand{\sprod}{{\textstyle\prod}}
\newcommand{\cdott}{{\,\cdot\,}}
\newcommand{\eqlaw}{\overset{\text{\tiny\rm law}}{=}}
\newcommand{\ton}{\,\underset{n\to\infty}{{\longrightarrow}}\,}
\newcommand{\tonw}{\,\underset{n\to\infty}{\overset{\text{\rm w}}{\longrightarrow}}\,}
\newcommand{\tol}{\,\underset{\lambda\to\lambda_*}{{\longrightarrow}}\,}
\begin{document}
\centerline{\large  Gibbs conditioning principle }
 \vspace{2mm}
\centerline{\large  for log-concave independent random variables}
\vspace{2mm}
\centerline{Eric Cator}
\vspace{1mm}
\centerline{Radboud University, Nijmegen}
\vspace{2mm}
\centerline{Pablo A. Ferrari}
\vspace{1mm}
\centerline{Universidad de Buenos Aires and Conicet}

% \pagestyle{empty}
%\maketitle

% \myfoottxt{1}{Radboud University, Nijmegen}
% \myfoottxt{2}{Universidad de Buenos Aires and Conicet}

%\received{Submitted January 1, 2026, Accepted April 29, 2026}

\paragraph{Abstract}
Let $\nu_1,\nu_2,\dots$ be a sequence of probabilities on the nonnegative integers, and $\sfX=(X_1,X_2, \dots)$ be a sequence of independent random variables $X_i$ with law $\nu_i$. For $\lambda>0$ denote $Z^\lambda_i:= \sum_x \lambda^x\nu_i(x)$ and $\lambda^{\max}:= \sup\{\lambda>0:  Z^\lambda_i<\infty \text{ for all }i\}$, and assume $\lambda^{\max}>1$. 
For $\lambda<\lambda^{\max}$, define the tilted probability $\nu_i^{\lambda}(x):= \lambda^x\nu_i(x)/Z^{\lambda}_i$, and let $\sfX^\lambda$ be a sequence of independent variables  $X^\lambda_i$ with law $\nu^{\lambda}_i$, and denote $S^\lambda_n:= X^{\lambda}_1+\dots+X^{\lambda}_n$, with $S_n=S^1_n$. Choose $\lambda^*\in(1,\lambda^{\max})$  and denote $R^*_n:= \EE (S^{\lambda^*}_n)$. The Gibbs Conditioning Principle (GCP) holds if $P(\sfX\in\cdott|S_n>R^*_n)$ converges weakly to the law of $\sfX^{\lambda^*}$, as $n\to\infty$.  We prove the GCP for log-concave  $\nu_i$'s, meaning $\nu_i(x+1)\,\nu_i(x-1) \le ( \nu_i(x))^2$, subject to a technical condition that prevents condensation. The canonical measures are the distributions of the first $n$ variables, conditioned on their sum being $k$. Efron's theorem states that for log-concave $\nu_i$'s, the canonical measures are stochastically ordered with respect to $k$. This, in turn, leads to the ordering of the conditioned tilted measures $P(\sfX^\lambda\in\cdott|S^\lambda_n>R^*_n)$ in terms of $\lambda$. This ordering is a fundamental component of our proof.

\paragraph{Keywords and phrases} empirical measures , Gibbs conditioning principle, large deviations, log-concave random variables, Efron theorem.

\paragraph{AMS 2000 subject classifications} 60F10, 82B26, 60K35

\noindent
\paragraph{Published in} {\sl Markov Processes and Related Fields }   {\bf 32}, 63–77 (2026).
\href{10.61102/1024-2953-mprf.2026.32.1.00a}{doi:10.61102/1024-2953-mprf.2026.32.1.00a}

\section{Introduction}
\label{sec1}

Consider a sequence $X_i$ of independent not necessarily identically distributed random variables taking values on $\ZZ_+$, the nonnegative integers; denote $\nu_i$ the law of $X_i$ and assume that $\nu_i$ is log-concave for all $i$. Let $\lambda^{\max}:= \sup\{\lambda>0:\sup_i \EE (\lambda^{X_i})<\infty\}$. For $\lambda\in(0,\lambda^{\max})$ let $X^\lambda_i$ be a sequence of independent variables with marginals $P(X^\lambda_i=x)=\frac1{Z_i^\lambda}\lambda^x\nu_i(x)$, with $Z_i^\lambda:=\EE (\lambda^{X_i})$. Consider the sums $S_n=X_1+\dots+X_n$ and $S^\lambda_n=X^\lambda_1+\dots+X^\lambda_n$. Choose $\lambda^*\in(0,\lambda^{\max})$ and denote $R^*_n:= \EE (S^{\lambda^*}_n)$. To avoid condensation, we need a condition about the derivatives of the function $M_n(t) := \sum_{i=1}^n\log(Z^{e^t}_i)$ around $t^*=\log \lambda^*$. Our Theorem \ref{thm1} says that if $\lambda^*>1$ then, the variables $(X_i)_{i\ge1}$ conditioned to $S_n> R^*_n$ converge weakly (finite dimensional distributions) to $(X^{\lambda^*}_i)_{i\ge1}$, as $n$ grows to infinity. Analogous results hold for $\lambda^*<1$ and/or the condition $S_n=\lfloor R^*_n\rfloor$.

This problem is well studied for independent and identically distributed (iid) variables~$Y_i$. Let $\hnu_n$ be the empirical measure of the first $n$ variables, $\hnu_n\varphi:= \frac1n\bigl(\varphi(Y_1)+\dots+\varphi(Y_n)\big)$, for test functions $\varphi$. Denote by $\Pi$ a convex set of measures. In the case of iid random variables ($\nu_i\equiv \nu$), the Sanov property says that $\hnu_n$ satisfies $\lim_n \frac1n \log P(\hnu_n\in \Pi)= -D(\Pi|\nu)$, where $D(\Pi|\nu)=\inf_{\nu'\in \Pi} D(\nu'|\nu)$ and $D(\nu'|\nu):= \sum_x \log(\nu'(x)/\nu(x)) \nu'(x)$. Vasicek \cite{zbMATH03664088} showed that the distribution of $Y_1$ given $\hnu_n\in\Pi$ converges to $\nu^*$ as $n\to\infty$, where $\nu^*$ satisfies $D(\Pi|\nu)=D(\nu^*|\nu)$, for some specific choices of $\Pi$. Czisar \cite{MR744233} shows that the variables $(Y_1,\dots,Y_\ell)$ converge in distribution to independent random variables with marginal distribution $Y_i\sim\nu^*$ for a wider set of models. This property was later called Gibbs conditioning principle by Dembo and Zeitouni \cite{zbMATH00410740}, see also Sepp{\"a}l{\"a}inen and Rassoul-Agha \cite{zbMATH06427428}. 
 L\'eonard and Najim  \cite{zbMATH01877108} proved the GCP using the Sanov theorem. 
 Diaconis and Freedman \cite{zbMATH04069931} consider the exponential family, and Liggett \cite{MR238373} the case of stable laws.

The Markov case was discussed by  Bolthausen and Schmock \cite{MR1027105},  Dobrushin and Hryniv \cite{MR1402652} and Meda and Ney \cite{zbMATH01405915}, see also Dembo and Zeitouni \cite{zbMATH00849085}. Messer and Spohn \cite{MR704588} and Deuschel, Stroock and Zessin  \cite{zbMATH04201256} prove similar results for classical statistical mechanic models with pair interactions. 

When there is no exponential moment the extra mass may go to a single site, a phenomenon known as condensation. See for instance Evans and Waclaw \cite{evans-waclaw}, Armend\'ariz and Loulakis~\cite{zbMATH05908027}. Godr\`eche~\cite{zbMATH07232704} and  Ferrari, Landim and Sisko  \cite{zbMATH05204121}. 

The proof of Theorem \ref{thm1} has two ingredients. The first is Proposition \ref{lem1} which establishes the stochastic order in $\lambda$ of the conditioned tilted measures $P(\sfX^\lambda_n\in\cdott|S^\lambda_n> R)$, for fixed $n$ and $R$. In turn this is a consequence of Efron theorem \cite{MR171335}, which says that under log-concavity the canonical measures $\mu_n(\cdott|k):=P(\sfX_n\in\cdott|S_n=k)$ are stochastically ordered in~$k$, for each~$n$. The second ingredient is that the conditioned measures $P(\sfX^\lambda_\ell\in\cdott|S^\lambda_n> R^*_n)$ converge as $n\to\infty$  to the unconditioned measure $P(\sfX^\lambda_\ell\in\cdott)$ when the conditioning set gets full probability, $P(S^\lambda_n> R^*_n)\to_n1$, for $\lambda>\lambda^*$. This limit holds under condition \eqref{eq:cond}, as we show in Lemma \ref{lem2}. Finally, the theorem follows from a sandwich argument, the conditioned measure is bounded above and below by unconditioned tilted measures, which converge to $P(\sfX^{\lambda^*}_\ell\in\cdott)$, as $\lambda\to\lambda^*$.

We do not make use of the entropy approach. 

The paper is organized as follows. Section \ref{sec2} introduces the notation and states the Gibbs conditioning principle for our setup. In Section \ref{sec3} we present the order in $\lambda$ of the conditioned tilted measures and the convergence of the conditioned tilted measures to the unconditioned ones; we then show that the GCP is a consequence of these results. In Section~\ref{sec4} we show that the coupling proof of Efron's theorem given by Liggett \cite{zbMATH01905957} can be extended to a larger set of measures, and establish the order of the conditioned tilted measures.

\section{Gibbs conditioning principle}\label{sec2}

In this section we introduce notation and state our main result, Theorem \ref{thm1}.

Let $\nu_1,\nu_2,\dots$ be probability measures on $\ZZ_+:=\{0,1,2,\dots\}$.
Denote
\begin{align*}
  Z^\lambda_i&:= \sum_z \lambda^z\nu_i(z),\qquad
   \lambda^{\max}
  :=\sup\{\lambda>0: Z^\lambda_i <\infty, \text{ for all }i\}. %%%\label{lm1} 
\end{align*}
For $\lambda\in(0,\lambda^{\max})$ define the $\lambda$-tilted measures $\nu^\lambda_i$ on $\ZZ_+$ by 
\begin{align*}
  \nu^\lambda_i(x)
  &:= \frac{\lambda^x \,\nu_i(x)}{Z^\lambda_i}.%%%\label{li3}  
\end{align*}     
Denote by $\sfX^\lambda=(X_1^\lambda,X_2^\lambda\dots)$, a family of independent random variables with marginals $X_i^\lambda \eqlaw \nu_i^\lambda$; denote $X_i =X^\lambda_i$ and $\sfX = (X_1,X_2,\dots)$. For each natural $n$ denote 
\begin{align*}
  S^\lambda_n := X^\lambda_1+\dots+X^\lambda_n,\quad S_n:= S^1_n.
\end{align*}
Fix $\lambda^* \neq 1$ with $0<\lambda^*<\lambda^{\max}$ and define for each $n\geq 1$
\begin{align*}
 %%% \label{rn1}
  R^*_n := \EE(S^{\lambda^*}_n).
\end{align*}
We are interested in the limiting distribution of $\sfX=(X_1,X_2,\dots)$ when we condition on $\{S_n > R^*_n\}$ if $\lambda^*>1$, or on $\{S_n < R^*_n\}$ if $\lambda^*<1$. % Denote the distribution of the first $\ell$ variables  by
% \begin{align}
%   P^\lambda_\ell := P(\sfX^\lambda_\ell\in \cdott),\qquad P^\lambda_\ell(\cdott|S_n > R^*_n) := P(\sfX^\lambda_\ell\in \cdott|S_n > R^*_n).
% \end{align}

% Given a natural number $n$, let the vector $\sfx_n$, the sum $s_n$ and the distribution $\mu_n$ of the coordinates  $\{1,\dots,n\}$ be
% \begin{align}
%   \label{osn1}
% \sfx_n:= (x_1,\dots,x_n),\quad  s_n := x_1+\dots+x_n,\quad   \mu_n(\sfx_n)&:= \nu_1(x_1)\dots\nu_n(x_n).
% \end{align}
% If the function $\varphi:\cX\to \RR$ has support contained in $\{1,\dots,n\}$, abusing notation write
% \begin{align}
%   \mu\varphi:=\int \varphi(\sfx_n)\, \mu(d\sfx) = \sum_{\sfx_n\in\cX_n} \varphi(\sfx_n) \,\mu_n(\sfx_n), 
% \end{align}
% and the expectation of $\varphi$ with respect to $\mu$ conditioned to the event  $\{\sfx_n\in\cX: s_n>r\}$, by
% \begin{align}
%   \label{cm1}
%   \mu(\varphi \,|\, s_n>r) &:= \frac{1}{\mu_n(s_n>r)}\sum_{\sfx_n\in\cX_n} \varphi(\sfx_n)\,\one\{s_n>r\}\,\mu_n(\sfx_n).
% \end{align}
% Use analogous notation for the conditioning sets $s_n<r$ and $s_n=\lfloor r\rfloor$.

Before we can give our main result, we need a technical condition which is closely related to ensuring that the variance of $S_n^{\lambda^*}$ grows to infinity: define the functions
\begin{align*}
  M_n(t) = \sum_{i=1}^n\log(Z^{e^t}_i) = \sum_{i=1}^n \log\bigl(\EE(e^{tX_i})\bigr).
\end{align*}
Also define $t^*=\log(\lambda^*)$. It is well known that $M_n$ is a convex function, and it is not hard to see that since $\lambda^*<\lambda^{\max}$, we have that $M_n$ is twice differentiable at $t^*$, but we need a stronger condition around $t^*$: 
\begin{align}
    &\forall \eps\in(0,\log(\lambda^{\max})-t^*):\nonumber \\
  \begin{split}
    &\; \hspace{-10pt} M_n(t^*-\eps) - M_n(t^*) + \eps M'_n(t^*) \to +\infty,\mbox{ and}\;\\
   &\; \hspace{-10pt} M_n(t^*+\eps) - M_n(t^*) - \eps M'_n(t^*) \to +\infty.
 \end{split}\label{eq:cond}
 \end{align}
 Note that the convexity of $M_n$ implies that both expressions are increasing in $\eps$. Since $M''(t^*)$ is equal to the variance of $S_n^{\lambda^*}$, the condition is often implied by this variance going to $+\infty$, but this might not be enough: maybe the convex functions $M_n$ approach another convex function with a discontinuity in the derivative at $t^*$. This might not satisfy condition \eqref{eq:cond}.

%Let $\nu_i$ be a probability measure on $\ZZ_+$. 
 Denote $\max_i=\sup\{x\in\ZZ_+: \nu_i(x)>0\}$, which could be infinite. 
We say that $\nu_i$  is log-concave if $\nu_i(x)>0$ for all $x\in [0,\max_i]\cap\ZZ$, and 
\begin{align*}
    %%%\label{a1}
    \frac{\nu_i(x-1)}{\nu_i(x)}\,\le\, \frac{\nu_i(x)}{\nu_i(x+1)},\quad 0< x < \tmax_i.
\end{align*}

\begin{Theorem}[Gibbs conditioning principle]
  \label{thm1} Let $\nu_1,\nu_2,\dots$ be a family of log-concave probability measures on~$\ZZ_+$. Let $\sfX=(X_1,X_2,\dots)$ be independent random variables with  $X_i\eqlaw \nu_i$. Choose $\lambda^*\in(0,\lambda^{\max})$, let $R^*_n=\EE(S^{\lambda^*}_n)$ and assume \eqref{eq:cond}. Then, the following limits hold.
\begin{align}
  &\text{If }\quad 1< \lambda^*<\lambda^{\max},\text{ then }
  &P(\sfX\in\cdott \,|\, S_n>R^*_n)
                       &\;\tonw\; P(\sfX^{\lambda^*}\in\cdott), \label{wek1}\\
  &\text{If } \quad 0<\lambda^*<1, \text{ then }
  & P(\sfX\in\cdott \,|\, S_n<R^*_n)
                       &\;\tonw\; P(\sfX^{\lambda^*}\in\cdott),\label{wek2}\\
  &\text{If }\quad 0<\lambda^*<\lambda^{\max},\text{ then }
  & P(\sfX\in\cdott \,|\, S_n=\lfloor R^*_n\rfloor)
                       &\;\tonw\; P(\sfX^{\lambda^*}\in\cdott), \label{wek3}
\end{align}
where $\tonw$ means convergence of the finite dimensional distributions, or weak convergence. 
% \begin{align}
%   \mu(\cdott \,|\, A_n) \;\tolaw\; \mu^{\lambda^*} \text{ if }
%   \begin{cases}
%     1< \lambda^*<\lambda^{\max} &\text{ and } A_n= \{s_n>R_n\}, \text{ or }\label{wek1}\\
%     0< \lambda^*<1 &\text{ and } A_n= \{s_n<R_n\}, \text{ or }\label{wek2}\\
%     1< \lambda^*<\lambda^{\max} &\text{ and } A_n= \{s_n=\lfloor R_n\rfloor\}\label{wek3}
%   \end{cases}
% \end{align}
\end{Theorem}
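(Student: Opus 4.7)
The plan is to run the sandwich strategy already sketched in Section~\ref{sec1}. I handle case \eqref{wek1} ($\lambda^*>1$, conditioning on $\{S_n>R^*_n\}$); \eqref{wek2} is the mirror image, and \eqref{wek3} follows the same scheme by squeezing $\mu_n(\cdot\mid\lfloor R^*_n\rfloor)$ between the conditionals on $\{S^{\lambda_2}_n\ge\lfloor R^*_n\rfloor\}$ and $\{S^{\lambda_1}_n\le\lfloor R^*_n\rfloor\}$. For the upper bound, fix $\lambda_2\in(\lambda^*,\lambda^{\max})$. Proposition~\ref{lem1} (stochastic monotonicity in $\lambda$ at the fixed level $R_n=R^*_n$), applied to the pair $1<\lambda_2$, gives
\[
  P(\sfX\in\cdott\mid S_n>R^*_n)\;\preceq\; P(\sfX^{\lambda_2}\in\cdott\mid S^{\lambda_2}_n>R^*_n).
\]
Since $\EE(S^{\lambda_2}_n)>R^*_n$, condition~\eqref{eq:cond} (applied with $t^*+\eps=\log\lambda_2$) forces $P(S^{\lambda_2}_n>R^*_n)\to 1$, so Lemma~\ref{lem2} yields $P(\sfX^{\lambda_2}_\ell\in\cdott\mid S^{\lambda_2}_n>R^*_n)\to P(\sfX^{\lambda_2}_\ell\in\cdott)$ for every fixed $\ell$. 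Letting $\lambda_2\downarrow\lambda^*$ and using the pointwise convergence $\nu^\lambda_i\to\nu^{\lambda^*}_i$ gives $\limsup_n P(\sfX_\ell\in\cdott\mid S_n>R^*_n)\preceq P(\sfX^{\lambda^*}_\ell\in\cdott)$ in stochastic order.

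For the lower bound, fix $\lambda_1\in(0,\lambda^*)$. Both $P(\sfX^{\lambda_1}\in\cdott\mid S^{\lambda_1}_n<R^*_n)$ and $P(\sfX\in\cdott\mid S_n>R^*_n)$ are mixtures of the canonical measures $\mu_n(\cdot\mid k)=P(\sfX_n\in\cdott\mid S_n=k)$ (which are independent of the tilt), the first over $k<R^*_n$ and the second over $k>R^*_n$. Efron's stochastic monotonicity $\mu_n(\cdot\mid k)\preceq\mu_n(\cdot\mid k')$ for $k\le k'$ then forces
\[
  P(\sfX^{\lambda_1}\in\cdott\mid S^{\lambda_1}_n<R^*_n)\;\preceq\; P(\sfX\in\cdott\mid S_n>R^*_n).
\]
Because $\EE(S^{\lambda_1}_n)<R^*_n$, the $-\eps$ half of \eqref{eq:cond} (at $t^*-\eps=\log\lambda_1$) gives $P(S^{\lambda_1}_n<R^*_n)\to 1$, and the $<$-version of Lemma~\ref{lem2} (identical proof, swapping the inequality) yields $P(\sfX^{\lambda_1}_\ell\in\cdott\mid S^{\lambda_1}_n<R^*_n)\to P(\sfX^{\lambda_1}_\ell\in\cdott)$. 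Sending $\lambda_1\uparrow\lambda^*$ produces $\liminf_n P(\sfX_\ell\in\cdott\mid S_n>R^*_n)\succeq P(\sfX^{\lambda^*}_\ell\in\cdott)$; combined with the upper bound, the finite-dimensional marginals are squeezed between distributions converging to $P(\sfX^{\lambda^*}_\ell\in\cdott)$, which gives \eqref{wek1}.

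The hardest step is Lemma~\ref{lem2} itself: the naive estimate $P(\cdot\mid S^\lambda_n>R_n)\le P(\cdot)/P(S^\lambda_n>R_n)$ is too weak to deliver convergence of the finite-dimensional marginals, and one must instead peel off the first $\ell$ coordinates and compare $P(S^\lambda_n>R_n)$ with the shifted tail $P(S^\lambda_n-X^\lambda_1-\dots-X^\lambda_\ell>R_n-x)$ for $x$ in a bounded set. Condition~\eqref{eq:cond}---the one-sided Legendre-type convexity gaps of $M_n$ at $t^*$, strictly stronger than merely $\Var(S^{\lambda^*}_n)\to\infty$ as already noted after its statement---is what justifies this comparison uniformly in $n$. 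A secondary subtlety is that all the mixture and monotonicity arguments above must be used in their multivariate (coupling) form, available from Efron's theorem for log-concave marginals, so that the squeeze controls joint distributions of $(X_1,\dots,X_\ell)$ and not only one-dimensional marginals.
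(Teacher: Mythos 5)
Your proof is correct and follows essentially the same route as the paper: the same sandwich with $P(\sfX^{\lambda}_\ell\in\cdott\mid S^\lambda_n>R^*_n)$, $\lambda>\lambda^*$, from above via \eqref{ord1}, and $P(\sfX^{\lambda}_\ell\in\cdott\mid S^\lambda_n<R^*_n)$, $\lambda<\lambda^*$, from below via the mixture-of-canonicals comparison \eqref{ord3}, followed by Lemma \ref{lem2} for the limit in $n$ and continuity in $\lambda$ (your lower-bound domination is in fact stated exactly as \eqref{ord3} delivers it). The only inaccuracy is the closing commentary: Lemma \ref{lem2} needs no peeling off of coordinates, since once Condition \eqref{eq:cond} forces $P(S^\lambda_n>R^*_n)\to1$, the elementary bound $\lvert P(B\mid A_n)-P(B)\rvert\le 2P(A_n^c)/P(A_n)$ already gives convergence of every finite-dimensional marginal, which is precisely the paper's argument.
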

The theorem is proven at the end of Section \ref{sec2}.  It is helpful to think of a continuous example where each $\nu_i$ corresponds to a normal random variable with expectation $a_i$ and variance $\sigma_i^2$. This means that 
\begin{align*}
  R^*_n= \sum_{i=1}^n a_i + \lambda^*\sum_{i=1}^n\sigma_i^2\quad \text{and}\quad M_n(t) =  \sum_{i=1}^n a_it +  \frac12\sum_{i=1}^nt^2\sigma_i^2
\end{align*}

In this case Theorem 1 is true if $\sum_{i\geq 1} \sigma_i^2 = +\infty$ (since in that case, conditioning on the sum, which can be done explicitly, does not change the variance of the first $k$ normal random variables, as $n\to \infty$). However, if $\sum_{i\geq 1} \sigma_i^2 < +\infty$, then the marginal of the first random variable does not converge to the tilted random variable $X^{\lambda^*}_1$, since in that case conditioning on the sum changes the variance of all variables, whereas tilting a normal random variable leaves the variance constant. Also Condition \eqref{eq:cond} is not true if the total variance is finite (and it is true if the total variance is infinite).

% Families of log-concave distributions include Geometric, Poisson, discrete positive normal, and
% uniform random variables in finite intervals. 
Log-concave distributions are connected with several areas of mathematics and statistics, see the survey by Saumard and Wellner \cite{zbMATH06386273}.

% Denoting by $\mu_n$ the law of the first $n$ marginals of $\mu$, 
% we are performing the thermodynamic limit of the specification
% \[\mu_n(\cdott|s_n>n R).\]

% The Gibbs conditioning principle is stated in a more general way. Given a function $U:\ZZ_+\to \RR_+$ denote
% \begin{align}
%   s^U_n &:=\frac1n\sum_{i=1}^n U(x_i)\\
%   \nu^{\beta, U}_i(x) &:= \frac1{Z^{\beta,U}_i}\,  e^{-\beta U(x)} \nu_i(x)\\
%   \mu^{\beta, U}_n(\sfx_n_n) &:= \prod_{i=1}^n \nu^{\beta, U}_i(x_i), \qquad \sfx_n_n=(x_1,\dots,x_n).
% \end{align}

\section{Conditioned tilted measures}
\label{sec3}

\paragraph{Stochastic order}
Consider the coordinatewise order in $\ZZ^n$,   $\sfx_n\le\sfx_n'$ if $x_i\le x'_i$,  $\forall i$.
A measure $\mu'$ stochastically dominates $\mu$, denoted 
$\mu\prec\mu'$, 
if there exists $\tmu$ on $\ZZ^n\times\ZZ^n$ with marginals $\mu$ and $\mu'$ such that % \[\tmu_n\bigl((\sfx_n,\sfx_n')\in\ZZ^n_+\times\ZZ^n_+: \sfx_n\le \sfx_n'\bigr)=1.\]
% This is equivalent to
%$  \mu_n\varphi \le \mu'_n\varphi$ for all non decreasing $\varphi:\ZZ^n_+\to \RR$.
a vector $(\sfX,\sfX')$ with distribution $\tmu$ satisfies
$\sfX\le\sfX'$ $\tmu$-a.s.. 

We next show that the distribution of independent random variables with $\lambda$ tilted log-concave marginals, conditioned to the event ``the sum of the first $n$ variables is bigger than $R$'', are stochastically ordered on $\lambda$ and $R$.
%Denote $\sfX^\lambda_n:=(X^\lambda_1,\dots,X^\lambda_n)$ and $\sfX_n=\sfX^1_n$.
\begin{Proposition}[Stochastic order of conditioned measures]
  \label{lem1}
Let $\nu_1,\!\nu_2,\! ...$ be log-concave probability measures on~$\ZZ_+$ and $X_1,X_2,\dots$ be independent random variables with  $X_i\eqlaw \nu_i$. Let $n\in \NN$, and $R$ be a real number satisfying  $0<R<R^{\max}_n:=\sup_{\lambda<\lambda^{\max}} \EE (S^\lambda_n)$. Then, for 
$0<\lambda <\lambda'<\lambda^{\max}$ we have
  \begin{align}
    P( \sfX^\lambda\in\cdott|S^\lambda_n>R)&\;
                                                 \prec\;
                                                 P(\sfX^{\lambda'}\in\cdott|S^{\lambda'}_n>R),\label{ord1} \\
    P( \sfX^\lambda\in\cdott|S^\lambda_n<R)&\;
                                                 \prec\;
                                                 P(\sfX^{\lambda'}\in \cdott|S^{\lambda'}_n<R),\label{ord2}
    \\
    P(\sfX^\lambda\in\cdott|S^\lambda_n<R)&\;
                                                \prec\;
                                            P(\sfX^{\lambda'}\in\cdott|S^{\lambda'}_n>R).\label{ord3}
  \end{align}
Furthermore, for $0<\lambda<\lambda^{\max}$ and $k\in \NN$ satisfying $P(S_n=k)>0$, we have
  \begin{align}
   P(\sfX^{\lambda}\in\cdott|S^{\lambda}_n\le k)  \prec\; P(\sfX^\lambda\in\cdott|S^\lambda_n=k)\;
                                                \prec\;
                                                P(\sfX^{\lambda}\in\cdott|S^{\lambda}_n\ge k).\label{2}  
  \end{align}
\end{Proposition}

The proposition is proven in Section \ref{sec31} using two ingredients. The first is Efron's Theorem \cite{MR171335},  which states that the canonical measures $\mu_n(\cdott|k)$ of product measures with log-concave marginals are stochastically ordered in $k$; see Liggett \cite{zbMATH01905957} for a proof using monotonicity of the zero range process. The second says that the conditioned sum distributions $P(S^\lambda_n\in \cdott|S^\lambda_n\in I)$ are stochastically ordered in $\lambda$, for any interval $I$, a one-dimensional statement. 

\paragraph{Convergence of conditioned tilted measures}
We now show that under Condition \eqref{eq:cond} 
the conditioned tilted measure converges to the unconditioned one as the conditioning set gets full probability. The following lemma does not use log-concavity.
\begin{Lemma}
  \label{lem2}
Assume Condition \eqref{eq:cond}.  Then 
  \begin{alignat}{2}
    \text{If }\lambda&\in(\lambda^*,\lambda^{\max}),& \text{  then }
    P(\sfX^\lambda\in\cdott|S^\lambda_n>R^*_n) &\;\tonw\;  P(\sfX^\lambda\in\cdott),\label{ll1}\\
    \text{If }\lambda&\in(0,\lambda^*),& \text{  then }
    P(\sfX^\lambda\in \cdott|S^\lambda_n<R^*_n) &\;\tonw\;   P(\sfX^\lambda\in\cdott).\label{ll2}
  \end{alignat}
\end{Lemma}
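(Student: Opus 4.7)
The plan is to reduce \eqref{ll1} to showing that the conditioning event has probability tending to~$1$, and then to establish this via a Chernoff bound that is sharp enough to be driven by Condition~\eqref{eq:cond} alone. For the reduction, let $B_n=\{S^\lambda_n>R^*_n\}$. If $P(B_n)\to 1$, then for any event $A$ depending only on $(X^\lambda_1,\dots,X^\lambda_\ell)$,
\begin{align}
  \bigl|P(A\mid B_n)-P(A)\bigr|
  = \frac{\bigl|P(A\cap B_n)-P(A)\,P(B_n)\bigr|}{P(B_n)}
  \;\le\; \frac{2\,P(B_n^c)}{P(B_n)}\;\ton\;0.
\end{align}
Since $(X^\lambda_1,\dots,X^\lambda_\ell)$ takes values in the discrete space $\ZZ_+^\ell$, this pointwise convergence applied to cylinder events $A=\{X^\lambda_1=x_1,\dots,X^\lambda_\ell=x_\ell\}$ is precisely the required weak convergence of the $\ell$-dimensional marginal.

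\textbf{Chernoff step.} Fix $\lambda\in(\lambda^*,\lambda^{\max})$, set $t=\log\lambda$ and $\eps=t-t^*>0$. For $s>0$, Markov's inequality applied to $e^{-sS^\lambda_n}$ gives
\begin{align}
  P(S^\lambda_n\le R^*_n)
  \;\le\; e^{sR^*_n}\,\EE\bigl[e^{-sS^\lambda_n}\bigr].
\end{align}
Using \eqref{li3} one computes $\EE[e^{-sX^\lambda_i}]=Z^{\lambda e^{-s}}_i/Z^\lambda_i$, so that by independence
\begin{align}
  \EE\bigl[e^{-sS^\lambda_n}\bigr] = \exp\bigl(M_n(t-s)-M_n(t)\bigr).
\end{align}
Plugging in $R^*_n=\EE(S^{\lambda^*}_n)=M'_n(t^*)$ and making the diagonal choice $s=\eps$, so that $t-s=t^*$, the exponent becomes
\begin{align}
  \eps M'_n(t^*) + M_n(t^*) - M_n(t^*+\eps)
  \;=\; -\bigl[M_n(t^*+\eps)-M_n(t^*)-\eps M'_n(t^*)\bigr],
\end{align}
which tends to $-\infty$ by the second half of Condition~\eqref{eq:cond}. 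Hence $P(B_n^c)\to 0$, and \eqref{ll1} follows from the reduction above.

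\textbf{Symmetric case and main obstacle.} For \eqref{ll2}, fix $\lambda\in(0,\lambda^*)$, set $t=\log\lambda$ and $\eps=t^*-t>0$, and apply Markov's inequality in the opposite direction: $P(S^\lambda_n\ge R^*_n)\le e^{-sR^*_n}\EE[e^{sS^\lambda_n}]$. With the diagonal choice $s=\eps$ (so that $t+s=t^*$) the exponent is
\begin{align}
  -\eps M'_n(t^*) + M_n(t^*) - M_n(t^*-\eps)
  \;=\; -\bigl[M_n(t^*-\eps)-M_n(t^*)+\eps M'_n(t^*)\bigr],
\end{align}
which tends to $-\infty$ by the first half of Condition~\eqref{eq:cond}, yielding $P(S^\lambda_n<R^*_n)\to 1$. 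The only delicate point, and the reason for the precise form of Condition~\eqref{eq:cond}, is the alignment between the Chernoff estimate and the hypothesis: the natural exponent is $M_n(t\pm s)-M_n(t)\mp sM'_n(t^*)$, and the condition is designed exactly to drive the diagonal choice $s=|t-t^*|$ (which makes $t\mp s=t^*$) to $+\infty$. Weaker hypotheses, such as $\Var(S^{\lambda^*}_n)\to\infty$, need not suffice in the borderline scenario flagged after \eqref{eq:cond}, where the $M_n$'s approach a convex function with a kink at~$t^*$.
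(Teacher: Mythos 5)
Your proposal is correct and follows essentially the same route as the paper: a Chernoff/Markov bound on $e^{\mp s S^\lambda_n}$ with the diagonal choice $s=|t-t^*|$, so that the exponent becomes exactly the quantity in Condition~\eqref{eq:cond}, plus the (standard) reduction from $P(B_n)\to1$ to convergence of the conditional finite-dimensional distributions, which the paper leaves implicit. No gaps.
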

\begin{proof}
Suppose $\lambda>\lambda^*$. We want to prove that $\PP(S_n^\lambda > R^*_n)\to 1$. Define $t=\log(\lambda)$ and $t^*=\log(\lambda^*)$. Define $\eps=t-t^*$.
\begin{align*}
\PP(S_n^\lambda \leq R^*_n) &= \PP(e^{-\eps S^\lambda_n}\geq e^{-\eps R^*_n}) \leq \frac{\EE(e^{-\eps S^\lambda_n})}{e^{-\eps R^*_n}} = \frac{\EE(e^{(t-\eps)S_n})}{e^{-\eps R^*_n}\,\EE(e^{t S_n})} .
\end{align*}
Therefore,
\begin{align*}
\log(\PP(S_n^\lambda \leq R^*_n)) & \leq M_n(t^*) - M_n(t^*+\eps) + \eps R^*_n\\
& = M_n(t^*) - M_n(t^*+\eps) + \eps M'_n(t^*) \to -\infty.
\end{align*}
For the last step we use Condition \eqref{eq:cond} and the fact that $M_n'(t^*) = \EE(S_n^{\lambda^*})=R^*_n$. This proves \eqref{ll1}. The proof for \eqref{ll2} is completely similar.
\end{proof}

\begin{proof}[of Theorem \ref{thm1}] We prove that the conditioned measure in \eqref{wek1} is dominated from above and below by measures converging to the distribution of $\sfX^{\lambda^*}$. Since we want the convergence of the finite dimensional distributions, it suffices to work with the marginal distribution of the vector $\sfX_\ell=(X_1,\dots,X_\ell)$ for each $\ell\ge1$. The upperbound for $\lambda^*<\lambda<\lambda^{\max}$ and $n\ge \ell$ is obtained by
\begin{align}
    \label{sand1}
   P(\sfX_\ell\in\cdott \,|\, S_n>R^*_n)\,
  &\prec\, P(\sfX^\lambda_\ell\in\cdott \,|\, S^\lambda_n>R^*_n)  \nonumber \\
  &\ton\, P(\sfX^\lambda_\ell\in\cdott)\, \tol\, P(\sfX^{\lambda^*}_\ell\in\cdott), 
\end{align}
where the domination comes from \eqref{ord1} as $\sfX_n=\sfX^1_n$, the $\lim_n$ from \eqref{ll1}, and the $\lim_\lambda$ from the continuity of $P(\sfX^\lambda_\ell\in\cdott)$ in $\lambda$. 
For the lowerbound, take $1<\lambda<\lambda^*$ to get
\begin{align}
    P(\sfX_\ell\in\cdott \,|\, S_n>R^*_n)\,
  &\succ\, P(\sfX^\lambda_\ell\in\cdott \,|\, S^\lambda_n<R^*_n) \nonumber \\
    &\ton\, P(\sfX^\lambda_\ell\in\cdott)\,
    \tol\, P(\sfX^{\lambda^*}_\ell\in\cdott), \label{21}
\end{align}
where we used \eqref{ord3} for the domination, \eqref{ll2} for the $\lim_n$, and continuity for the $\lim_\lambda$.  The bounds \eqref{sand1} and \eqref{21} show \eqref{wek1}.

To show  \eqref{wek2} proceed in the same way. For the upperbound let  $\lambda^* <\lambda<1$,
\begin{align} 
    P(\sfX_\ell\in\cdott \,|\, S_n<R^*_n)\,
  &\prec\, P(\sfX^\lambda_\ell\in\cdott \,|\, S_n>R^*_n) \nonumber \\
    &\ton\, P(\sfX^\lambda_\ell\in\cdott)\,
    \tol\, P(\sfX^{\lambda^*}_\ell\in\cdott). \label{sand2}
\end{align}
% \begin{align}
%     \label{sand2}
%     \mu_n(\cdott|s_n<R_n)\,
%   &\prec\, \mu^{\lambda}_n(\cdott|s_n>R_n) \,
%     \tolaw\, \mu^{\lambda}\,
%     \tou\, \mu^{\lambda(R)}.
% \end{align}
For the lowerbound, let $0<\lambda<\lambda^*$, 
\begin{align}
    P(\sfX_\ell\in\cdott \,|\, S_n<R^*_n)\,
  &\succ\, P(\sfX^\lambda_\ell\in\cdott \,|\, S^\lambda_n<R^*_n) \nonumber \\
    &\ton\, P(\sfX^\lambda_\ell\in\cdott)\,
    \tol\, P(\sfX^{\lambda^*}_\ell\in\cdott). \label{sand4}
\end{align}
% \begin{align}     
%     \mu_n(\cdott|s_n<R_n)\,
%     &\succ\, \mu^\lambda_n(\cdott|s_n<R_n) \,\tolaw\, \mu^\lambda\, \tou\, \mu^{\lambda(R)}.\label{sand4}
% \end{align}
The bounds \eqref{sand2} and \eqref{sand4} show \eqref{wek2}.

To show \eqref{wek3} use \eqref{2} to get
\begin{align}
  \label{can4}
  P(\sfX_\ell\in\cdott  \,|\, S_n\le \lfloor R^*_n \rfloor )
  &\prec
  P(\sfX_\ell\in\cdott  \,|\, S_n= \lfloor R^*_n \rfloor) \nonumber \\
  &\prec
  P(\sfX_\ell\in\cdott \,|\, S_n> \lfloor R^*_n \rfloor ),
\end{align}
so that \eqref{sand2} and \eqref{sand2} imply \eqref{wek3}.
\end{proof}

\section{Measure dominations}
\label{sec4}

In this section we show Proposition \ref{lem1}.
\subsection{Mixture of canonical measures}
\label{sec31}

Denote by $\pi^\lambda_n$ the distribution of the sum $S^\lambda_n$ and $\pi^\lambda_n(k|I)$ the distribution of the sum conditioned to belong to an integer set $I$, 
\begin{align*}
  \pi^\lambda_n(k):= \PP(S^\lambda_n=k),\qquad  \pi^\lambda_n(k|I)&:= \frac{\pi^\lambda_n(k)}{\pi^\lambda_n(I)}.%%%\label{pi0}
\end{align*}
Define the canonical measure $\mu_n(\cdott|k)$ as the distribution of $\sfX_n$ conditioned to $S_n=k$,
\begin{align}
  \label{can}
  \mu_n(\cdott|k) &:=  \frac1{\pi_n(k)}\,\PP(\sfX_n=\cdott,S_n=k).
\end{align}
The distribution of $\sfX^\lambda_n$ conditioned to $S^\lambda_n=k$ coincides with the canonical measure for each $\lambda<\lambda^{\max}$,
\begin{align*}
  \frac1{\pi^\lambda_n(k)}\,
  \PP(\sfX^\lambda_n   =\cdott,S^\lambda_n=k)
  = \frac{\sfZ^\lambda_n}{\lambda^k\pi_n(k)}
  \frac{\lambda^k}{\sfZ^\lambda_n}\,
  \PP(\sfX_n =\cdott,S_n=k)= \mu_n(\cdott|k),
\end{align*}
where $\sfZ^\lambda_n:=\sum_\ell \lambda^\ell P(S_n=\ell)$. 
This implies that the conditioned tilted measure is a mixture of canonical measures,
\begin{align}
  \PP(\sfX^\lambda_n=\cdott\,|\,S^\lambda_n\in I)
  &= \sum_{k \in I} \pi^\lambda_n(k|I)\; \mu_n(\cdott|k),\quad I\subset\ZZ_+.\label{can4}
\end{align}
In statistical mechanics this is called the grand canonical measure.
The proof of Proposition~\ref{lem1} is based on the next two results.
\begin{Lemma}[Order of one-dimensional tilted measures]\label{lem15}
Assume $n>0$ and $0<\lambda\le\lambda'<\lambda^{\max}$. 
Let $I$ be an integer interval $I\subset[0,R_n^{\max}]\cap\ZZ$. Then,
  \begin{align}
    \pi^\lambda_n(\cdott|I)&\;\prec\; \pi^{\lambda'}_n(\cdott|I) \label{pi6}% ,
  \end{align}
\end{Lemma}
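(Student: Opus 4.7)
The plan is to reduce this one-dimensional claim to the classical \emph{monotone likelihood ratio} (MLR) argument. The starting point is the basic tilting identity
\begin{align}
\pi^\lambda_n(k) \;=\; \frac{\lambda^{k}\,\pi_n(k)}{\prod_{i=1}^{n} Z_i^{\lambda}},
\end{align}
obtained by summing $\prod_i \nu_i^\lambda(x_i)$ over all $n$-tuples with $x_1+\cdots+x_n=k$. From this formula the support $\{k:\pi_n^\lambda(k)>0\}$ coincides with $\{k:\pi_n(k)>0\}$ and is therefore independent of $\lambda\in(0,\lambda^{\max})$. In particular, whenever the conditioning event $\{S_n^\lambda\in I\}$ has positive probability for one tilt it does for all, so both $\pi_n^\lambda(\cdot|I)$ and $\pi_n^{\lambda'}(\cdot|I)$ are supported on the same subset of $I$.

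Next I would compute the ratio of conditional mass functions: for any $k<k'$ lying in this common support,
\begin{align}
\frac{\pi_n^{\lambda'}(k'|I)\,\pi_n^{\lambda}(k|I)}{\pi_n^{\lambda'}(k|I)\,\pi_n^{\lambda}(k'|I)}
\;=\;\left(\frac{\lambda'}{\lambda}\right)^{k'-k}\;\ge\; 1,
\end{align}
since the normalizing factors $\prod_i Z_i^\lambda$ and $\pi_n^\lambda(I)$ all cancel and $\lambda'\ge\lambda$. This is precisely the MLR order of $\pi_n^{\lambda'}(\cdot|I)$ over $\pi_n^\lambda(\cdot|I)$.

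Finally, I would invoke the classical fact that MLR implies stochastic dominance on $\ZZ$ to conclude \eqref{pi6}. If a self-contained justification is preferred, the quantile coupling works directly: letting $F_\lambda$ and $F_{\lambda'}$ denote the distribution functions of $\pi_n^\lambda(\cdot|I)$ and $\pi_n^{\lambda'}(\cdot|I)$, the MLR inequality above yields $F_{\lambda'}(k)\le F_\lambda(k)$ for every $k$, so with $U\sim\mathrm{Unif}(0,1)$ the pair $\bigl(F_\lambda^{-1}(U),F_{\lambda'}^{-1}(U)\bigr)$ provides a monotone coupling. There is no real obstacle here: the whole argument is genuinely one-dimensional and rests on the single observation that tilting by the factor $\lambda'/\lambda\ge 1$ exponentially favors larger values of $k$. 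The only technical point worth flagging is that the hypothesis $I\subset[0,R_n^{\max}]\cap\ZZ$ enters only to ensure that the conditioning has positive probability, so that the conditional measures are well defined.
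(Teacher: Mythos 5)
Your proof is correct, but it takes a genuinely different route from the paper. You observe that $\pi_n^\lambda(k)=\lambda^k\pi_n(k)/\prod_i Z_i^\lambda$, so that after conditioning on $I$ the likelihood ratio $\pi_n^{\lambda'}(k|I)/\pi_n^{\lambda}(k|I)$ is proportional to $(\lambda'/\lambda)^k$, nondecreasing in $k$; the monotone likelihood ratio order then gives the usual stochastic order, realized by the quantile coupling. The paper instead proves the lemma dynamically: it introduces a birth--death chain on $I$ with birth rate $\lambda$ and death rate $\pi(k)/\pi(k+1)$, for which $\pi^\lambda(\cdot|I)$ is reversible, builds an explicit order-preserving coupling of the $\lambda$- and $\lambda'$-chains, and reads off the domination from the occupation-time representation of the coupled chain's invariant measure. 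Your argument is shorter, more elementary, and makes transparent that no structure beyond the exponential tilt is used; the paper's argument is longer but is deliberately of the same type as its coupling proof of Efron's theorem in Section 5, so the two fit a common template. One small remark: the hypothesis $I\subset[0,R_n^{\max}]\cap\ZZ$ does not by itself guarantee $\pi_n(I)>0$ for an arbitrary integer interval (e.g.\ if all $\nu_i$ charge only even integers), so your closing sentence should be read as the implicit standing assumption that the conditional measures are well defined, which is all that is needed in the application.
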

Lemma \ref{lem15}, proven in Section \ref{sec41}, does not require log concavity.

\begin{Theorem}[Efron \cite{MR171335}. Order of canonical measures]
  \label{lem16}
Let $X_i$ be a sequence of independent random variables  with log-concave marginals $X_i\eqlaw\nu_i$. Then, the canonical measures are stochastically ordered: for $n>0$ and $k\ge0$ such that $\pi_n(k+1)>0$, we have  
  \begin{align}
\mu_n(\cdott|k)\, &\prec\, \mu_n(\cdott|k+1).\label{3}
  \end{align}
\end{Theorem}

We provide a proof and more references in Section \ref{sec5}.

\begin{proof}[of Proposition \ref{lem1}]
Take $\lambda\in(\lambda^*,\lambda^{\max})$ and denote $I_n:= (R^*_n,R_n^{\max}]\cap \ZZ$. Use  Lemma~\ref{lem15} to construct a coupling $(K,K')$ satisfying that 
$K$ and $K'$ have marginal distributions $\pi^\lambda_n(\cdott|I_n)$, $\pi^{\lambda'}_n(\cdott|I_n)$, respectively, and
$K\le K'$.
Use Theorem \ref{lem16} to construct a coupling $(\sfY^k_n)_{k\ge0}$ such that the marginal $\sfY^k_n$ has canonical distribution $\mu_n(\cdott|k)$,  and $\sfY^k_n\le \sfY^{k+1}_n$ for all $k\ge1$.

Consider the vector $(K,K')$ independent of the family $(\sfY^k_n)_{k\ge0}$ and notice that the  coupling $(\sfY^K_n,\sfY^{K'}_n)$ satisfies
\begin{align*}
  \sfY^{K}_n &\eqlaw P(\sfX^\lambda_n\in\cdott| S^\lambda_n>R^*_n),\\
  \sfY^{K'}_n &\eqlaw P(\sfX^{\lambda'}_n\in\cdott| S^{\lambda'}_n>R^*_n), \\
  \sfY^K_n&\le \sfY^{K'}_n.
\end{align*}
This proves \eqref{ord1} because $\sfX^\lambda_n$ is independent of $(X^\lambda_i:i>n)$.

The same proof works for \eqref{ord2}. To see \eqref{ord3} observe that both measures are mixtures of canonical distributions, and the measure conditioned to $S^\lambda_n<R^*_n$ concentrates on canonical measures with $k<R^*_n$, while the measure on the right concentrates on canonicals with $k>R^*_n$, so that the order of the canonicals implies the order of those measures.

To see \eqref{2} observe that its middle term is the $k$ canonical measure $\mu_n(\cdott|k)$ while the right term is a mixture of $k'$ canonical measures with
$k'\ge k$,
$$\sum_{k'\ge k} \pi^\lambda_n(k'|[k,\infty)) \mu(\cdott|k'),$$
by \eqref{can4}. Hence, the second inequality in \eqref{2} is an immediate consequence of \eqref{3}. The same argument shows the first inequality.
\end{proof}

\subsection{Order of one dimensional tilted measures}
\label{sec41}

Here we prove Lemma \ref{lem15}, a general one-dimensional result. Let 
$\pi$ be a probability on $\ZZ_+$, 
denote $Z^\lambda:=\sum_k \lambda^k \pi(k)$ and $\lambda^{\max}:=\sup\{\lambda:Z^\lambda<\infty\}$.
For $\lambda<\lambda^{\max}$ and $I\subset\ZZ_+$, define
\begin{align*}
  %%%\label{pl7}
  \pi^\lambda(k) &:= \frac1{Z^\lambda}\lambda^k \pi(k),\\
  \pi^\lambda(k|I) &:=  \frac{1}{\pi^\lambda(I)}\pi^\lambda(k)\,\one\{k\in I\}.
\end{align*}
We will show that when $I$ is a finite or semi infinite interval,
% \begin{alignat}{2}
$$
  \pi^\lambda(\cdott|I)\;\prec\;\pi^{\lambda'}(\cdott|I), \quad 0<\lambda\le \lambda'<\lambda^{\max}. %%%\label{pi4}% \\
%\end{alignat}
$$

Take $\lambda<\lambda^{\max}$ and consider a birth and death process $K^\lambda(t)$ with rates
\begin{align*}
  %%%\label{bd1}
  q^\lambda(k,k+1) &:= \lambda,\\
  q^\lambda(k+1,k)&=q(k+1,k) := \frac{\pi(k)}{\pi(k+1)},\quad k\ge0;
\end{align*}
the rates vanish otherwise. Notice that 
the death rates do not depend on $\lambda$. The measure 
$\pi^\lambda$ satisfies the detailed balance equations,
\begin{align*}
  \pi^\lambda(k)\, q^\lambda(k,k+1)
  &= \frac{\lambda^k}{Z^\lambda}\,\pi(k)\, \lambda \nonumber \\
  &= \frac{\lambda^{k+1}}{Z^\lambda}\, \pi(k+1)\,\frac{\pi(k)}{\pi(k+1)}\\
  &= \pi^\lambda(k+1)\, q^\lambda(k+1,k),%%%\label{db33}
\end{align*}
so that it is reversible for the process $K^\lambda(t)$.
Let  $K^{\lambda,I}(t)$ be the process constrained on $I\subset\ZZ_+$, with rates
  \begin{align*}
      %%%\label{bd2}
  q^{\lambda,I}(k,\tik) := q^{\lambda}(k,\tik)\,\one\{\tik\in I\},\quad k\in I.
  \end{align*}
The conditioned measure $\pi^\lambda(\cdott|I)$ is reversible for $q^{\lambda,I}$, indeed,  for $k,\tik \in I$, 
  \begin{align*}
    \pi^\lambda(k|I)\,q^{\lambda,I}(k,\tik)
    &= \frac{\pi^\lambda(k)}{\pi^\lambda(I)} \,q^{\lambda}(k,\tik) \\
    &= \frac{\pi^\lambda(\tik)}{\pi^\lambda(I)}\,q^{\lambda}(\tik,k)\\
&  = \pi^\lambda(\tik|I)\, q^{\lambda,I}(\tik,k).%%%\label{db5}
\end{align*}
We now couple two birth and death processes with different $\lambda$'s.
\begin{Lemma}
  \label{lem66}
There exists a coupling
\[
  (\tK(t))_{t\ge0}:=(K(t),K'(t))_{t\ge0}
\]
with marginals distributed as $(K^{\lambda,I}(t))_{t\ge0}$ and $(K^{\lambda',I}(t))_{t\ge0}$, respectively, such that 
\begin{align}
    \label{bd3}
    K(0)\le K'(0)\quad\text{implies}\quad  K(t)\le K'(t).
\end{align}
\end{Lemma}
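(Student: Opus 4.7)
\textbf{Proof plan for Lemma \ref{lem66}.} The plan is to build an explicit Markov coupling whose two marginals are exactly $(K^{\lambda,I}(t))_{t\ge0}$ and $(K^{\lambda',I}(t))_{t\ge0}$. The key observation from \eqref{bd1} is that the death rates $q(k+1,k)=\pi(k)/\pi(k+1)$ do not depend on $\lambda$, while the birth rates $\lambda$ do not depend on the current state. Consequently the $\lambda'$-chain differs from the $\lambda$-chain only by a uniform excess birth rate $\lambda'-\lambda$, which one can assign exclusively to $K'$ in order to preserve the ordering. Since the jumps are of size $\pm 1$, order can be maintained by synchronising the transitions whenever the two coordinates collide on the diagonal.

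Concretely, I would define the generator of $(K(t),K'(t))$ on the set $\{(k,k')\in I\times I:k\le k'\}$ as follows. Off the diagonal, when $k<k'$, the two coordinates move independently, each with its own constrained rates $q^{\lambda,I}$ and $q^{\lambda',I}$. On the diagonal, when $k=k'$, the moves are linked: at rate $\lambda\,\one\{k+1\in I\}$ both chains go up to $k+1$; at rate $(\lambda'-\lambda)\,\one\{k+1\in I\}$ only $K'$ goes up; at rate $q(k,k-1)\,\one\{k-1\in I\}$ both chains go down to $k-1$.

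The verification then has two steps. First, order preservation: on the diagonal $(k,k)$ the three admissible transitions produce $(k+1,k+1)$, $(k,k+1)$ or $(k-1,k-1)$, all of which satisfy $K\le K'$; off the diagonal, a $\pm 1$ jump cannot violate $k<k'$ since $k+1\le k'$ and $k'-1\ge k$. Second, marginal rates: read off from the generator, $K$ jumps up at rate $\lambda\,\one\{k+1\in I\}$ and down at rate $q(k,k-1)\,\one\{k-1\in I\}$ in every state, which agrees with $q^{\lambda,I}$; symmetrically, $K'$ jumps up at total rate $\lambda+(\lambda'-\lambda)=\lambda'$ on the diagonal and at rate $\lambda'$ off the diagonal (times $\one\{k+1\in I\}$), and down at rate $q(k',k'-1)\,\one\{k'-1\in I\}$, matching $q^{\lambda',I}$.

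The point that I expect to require the most care, and which I view as the main obstacle, is the bookkeeping at the endpoints of $I$: one has to check that the indicators $\one\{k\pm 1\in I\}$ attached to the linked diagonal transitions coincide exactly with those in the constrained marginal rates $q^{\lambda,I}$ and $q^{\lambda',I}$, so that no probability mass is lost, duplicated, or leaked outside $I$ at the boundary. Once this is done, \eqref{bd3} follows immediately, since the initial condition $K(0)\le K'(0)$ is propagated by every admissible transition.
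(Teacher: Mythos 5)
Your proposal is correct and is essentially the same coupling as in the paper: independent constrained jumps off the diagonal, synchronized up/down jumps on the diagonal at rates $\lambda$ and $q(k,k-1)$, and an extra solo up-jump of $K'$ at rate $\lambda'-\lambda$, exploiting that the death rates are $\lambda$-independent. The boundary bookkeeping you flag as the main obstacle is in fact immediate, since on the diagonal $k=k'$ the indicators $\one\{k\pm1\in I\}$ coincide for both coordinates and the marginal rates sum correctly.
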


\begin{proof}
  The death rate is the same for both marginals, while the birth rates are $\lambda<\lambda'$.
Coupling: independent marginal jumps when $k\neq k'\in I$. When $k=k'$ both marginals jump down together at rate $q(k,k-1)\,\one\{k-1\in I\}$, they jump up together at rate $\lambda \,\one\{k+1\in I\}$, and the second marginal jumps up alone at rate $(\lambda'-\lambda)\one\{k'+1\in I\}$. The explicit rates, denoted $\tb$, are given for $k,k'\in I$ by
  \begin{alignat}{4}
    %%%\label{LB1}
    \tb\bigl((k,k'),(k+1,k'+1)\bigr) &= \lambda,     &\quad && k &=k',\,      &k+1   &\in I \nonumber \\[-1mm]
    \tb\bigl((k,k'),(k,k'+1)\bigr)&=\lambda'-\lambda,&\quad && k &=k',\,      &k'+1  &\in I \nonumber \\[-1mm]
    \tb\bigl((k,k'),(k-1,k'-1)\bigr) &=q(k,k-1),      &\quad && k &= k' ,\,   & k-1  &\in I \nonumber \\[1mm]
    \tb\bigl((k,k'),(k+1,k')\bigr) &= \lambda,      &\quad && k &\neq k',\,   &k+1   &\in I \nonumber \\[-1mm]
    \tb\bigl((k,k'),(k,k'+1)\bigr)&=\lambda',        &\quad && k &\neq k',\,\,  &k'+1  &\in I \nonumber \\[-1mm]
    \tb\bigl((k,k'),(k,k'-1)\bigr) &= q(k',k'-1),    &\quad && k &\neq k',\,  &k'-1  &\in I \nonumber \\[-1mm]
    \tb\bigl((k,k'),(k-1,k')\bigr) &= q(k,k-1),      &\quad && k &\neq k',\,  &k-1   &\in I,\nonumber
  \end{alignat}
and the rates vanish otherwise. The reader can check that the 
marginal rates are $q^{\lambda,I}$, and $q^{\lambda',I}$, respectively, and that 
jumps violating order have rate zero.
\end{proof}

\begin{proof}[of Lemma \ref{lem15}] Let $h\in I$ and denote $\tE_{h,h}$ be the expectation associated to the coupling $\tK(t)$ with initial condition $\tK(0)=(h,h)$. Denote $\tT$ the return time to $(h,h)$, that is, the hitting time of $(h,h)$ after visiting another state, defined by
  \begin{align*}
    %%%\label{1}
    \tT&:= \inf\bigl\{t>0:\tK(t)=(h,h)\text{ and }\tK(t-)\neq(h,h)\bigr \}.
  \end{align*}
The measure $\tpi$ defined by
  \begin{align}
    \label{tmu1}
    \tpi(k,k') &= \frac1{\tE_{h,h} (\tT)}\;\tE_{h,h}\Bigl(\int_0^\tT \,\one\{\tK(t)=(k,k')\}\,dt\Bigr)
\end{align}
is invariant for the coupling $\tK(t)$, see Proposition 2.59 of Liggett  \cite{zbMATH05686748}.  
  By \eqref{bd3}, the integrand in \eqref{tmu1} vanishes when $k>k'$. 
  This implies that $\tpi$ concentrates on $\{(k,k'): k\le k'\}$, implying  \eqref{pi6}.
\end{proof}

\subsection{Order of canonical measures}
\label{sec5}

In this section we show that the canonical measures are stochastically ordered under log-concavity, Theorem \ref{lem16}.  

When the measures $\nu_i$ are absolutely continuous, the result was proven by Efron \cite{MR171335} for a set of functions called PF$_2$ (Polya Frequency functions), which coincides with the set of log-concave functions, as observed by Schoemberg \cite{MR72918}. See also Pitman \cite{MR1429082} and  Theorem 6.1 in Saumard and Wellner \cite{MR3799644}, for the continuous case. Efron mentions that his proof works for the discrete case, which was later proven dynamically, in the framework of queuing theory, see Shantikumar \cite{zbMATH03999013},  Van der Wal \cite{zbMATH04090531} and Daduna and Szekli \cite{zbMATH00885158}.

Liggett \cite{zbMATH01905957} proved the theorem for measures satisfying $\nu_i(x)>0$ for all $x\in \NN_0$, $i\in\{1,\dots,n\}$.
His proof considers zero range processes
$$\sfX(t)=(X_1(t),\dots, X_n(t))\in \NN_0^{\{1,\dots,n\}},$$
where a particle jumps from site $i$ to site $j$ with rate
$$g_i(z)=\frac{\nu_i(z-1)}{\nu_i(z)},$$
where $z$ is the number of particles in site $i$ before the jump.  The canonical measures $\mu_n(\cdott|k)$ are reversible for the process with those rates. Liggett constructs a monotone coupling of two zero range processes $(\sfX(t), \sfX'(t))$ with ordered initial configurations. A monotone coupling conserves the order of the marginals,  a property that holds for the zero range if $g_i$ are increasing for all $i$, a consequence of log-concavity. 

We adapt Liggett's proof for the case $\nu_i(x)>0$ if and only if $x\in \ZZ\cap [0,\max_i]$, where $\max_i\le \infty$, and call it a queuing system with finite waiting room.

\paragraph{Queuing system with finite waiting room}

For each $i$ consider a maximal value $\tmax_i\in\NN\cup\{\infty\}$ and a rate function $g_i:\ZZ\cap [0,\tmax_i]\to \RR_+$, with $g_i(0)=0$. Denote
\begin{align*}
 \cX_n:= (\ZZ\cap [0,\tmax_1])\times \dots \times  (\ZZ\cap [0,\tmax_n]).
\end{align*}
Denote by $\sfx=(x_1,\dots,x_n)$ the elements of $\cX_n$. Define a pure jump continuous time Markov process $\sfX(t)$ on $\cX_n$ with rates 
\begin{align}
  r(\sfx,\sfz)
  &:=
  \begin{cases}
    g(x_i) & \text{if } \sfz = \theta^{ij}\sfx, \text{ and } x_j<\tmax_j,\\
    0&\text{else}.
  \end{cases}\label{rz1},\qquad \sfx,\sfz\in\cX_n,\\
  (\theta^{ij}\sfx)_h&:= x_h-\one\{h=i\}+ \one\{h=j\}, \quad h\in\{1,\dots,n\}. \nonumber
  %%%\label{rz11}
\end{align}
When $\tmax_j=\infty$ for all $j$, the jump rate at $i$ is $g_i(x_i)$, and we have just the non-homogeneous zero range process as in \cite{zbMATH01905957}. Otherwise,  when $x_j= \tmax_j$ the rate of jumping to $j$ is zero, and we have a misanthrope process, see \cite {zbMATH03883377} and \cite{evans-waclaw}. Taking  $\tmax_i\equiv k$ and $g_i(x)\equiv 1$, the misanthrope process coincides with the (homogeneous) $k$-exclusion process, see \cite{zbMATH01359649}.

\paragraph{The Basic coupling}  The basic coupling $\tsfX(t)=(\sfX(t),\sfX'(t))\in\cX_n\times\cX_n$ is defined as the pure jump Markov process with rates
 \begin{align*}
   %%% \hspace{-3mm}
   \tr((\sfx,\sfx'),(\sfz,\sfz'))
    =
      \begin{cases}
        g(x_i)\wedge g(x'_i)
        &\text{if }( \sfz,\sfz')=(\theta^{ij}\sfx,\theta^{ij}\sfx'),  \\
        &x_j < \tmax_j, \, x'_j  <\tmax_j,\\
        g(x_i)\wedge g(x'_i)
        &\text{if }( \sfz,\sfz')=(\sfx,\theta^{ij}\sfx'), \\
        &x_j=\tmax_j, \,x'_j<\tmax_j,\\
        g(x_i)\wedge g(x'_i)
        &\text{if }( \sfz,\sfz')=(\theta^{ij}\sfx,\sfx'), \\
        &x_j<\tmax_j, \,x'_j=\tmax_j,\\
        (g(x'_i)-g(x_i))^+&\text{if } ( \sfz,\sfz')=(\sfx,\theta^{ij}\sfx'),\; x'_j<\tmax_j,\\
        (g(x_i)-g(x'_i))^+&\text{if } ( \sfz,\sfz')=(\theta^{ij}\sfx,\sfx'), \; x_j<\tmax_j,\\
        0&\text{else},
      \end{cases} 
 \end{align*}
The marginals $\sfX(t)$ and $\sfX'(t)$ of the basic coupling are Markov processes with rates \eqref{rz1}. The next lemma shows that the coupling is monotone.

\begin{Lemma}[The basic coupling is monotone]\label{mon}
Assume  $g_i(x)\le g_i(x+1)$ for $x,x+1\in[0,\max_i]$. 
Then, the basic coupling with rates $\tr$ is monotone, that is,
  \begin{align*}
    %%%\label{atra3}
   \text{   $\sfX(0)\le \sfX'(0)$ implies $\sfX(t)\le \sfX'(t)$.}
  \end{align*}
\end{Lemma}
\begin{proof} When $\sfx\le \sfx'$ the rates $\tr$ are zero in the second and fifth lines. In the other three cases the configurations after the jump are ordered $\sfz\le \sfz'$.  
\end{proof}

\paragraph{Canonical measures and queuing system}

Let $\nu_i$ be a family of probability measures on $\ZZ_+$, denote
$\tmax_i:=\sup\{z: \nu_i(z)>0\}\;(\le \infty)$,
assume 
         \begin{align}
          \nu_i(z)&>0,\qquad \text{ for }z\in\ZZ\cap[0,\tmax_i], \label{uiz}
\end{align}
and denote
\begin{align}
  \label{ginu1}
g_{i}(z) :=
  \begin{cases}
    \displaystyle{\frac{\nu_i(z-1)}{\nu_i(z)}}, & z\in\ZZ\cap(0, \tmax_i]\\[1mm]
    0,&\text{else.}
  \end{cases}
\end{align}
The assumption \eqref{uiz} is necessary to construct particle systems with rates \eqref{ginu1} and invariant measure $\mu_n(\cdott|k)$. The condition is satisfied under log-concavity, but the next two lemmas do not require that.  
\begin{Lemma}
  \label{rev4}
Let $\nu_i$ satisfy \eqref{uiz}. Then, for each $k\ge0$ the canonical measure $\mu_n(\cdott|k)$ associated to $\nu_1,\dots,n_n$ is reversible for the process with rates \eqref{rz1} and $g_i$ given by \eqref{ginu1}.
\end{Lemma}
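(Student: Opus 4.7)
The plan is to verify detailed balance directly. Reversibility of $\mu_n(\cdot|k)$ for the dynamics with rates \eqref{rz1} amounts to checking
\[
\mu_n(\sfx|k)\,r(\sfx,\theta^{ij}\sfx) \;=\; \mu_n(\theta^{ij}\sfx|k)\,r(\theta^{ij}\sfx,\sfx)
\]
for every pair $(\sfx,\theta^{ij}\sfx)$ of configurations in the support of $\mu_n(\cdot|k)$, and for every ordered pair $(i,j)$ of sites. Since $\theta^{ij}$ conserves the total mass, both configurations lie in $\{\sum x_h = k\}$, so the normalising constant $\pi_n(k)$ cancels, and only the product-measure weights $\prod_h \nu_h(x_h)$ matter.

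The key step is the algebraic identity. First I would note that if $\sfx$ is in the support (so $x_h\in\ZZ\cap[0,\tmax_h]$ for all $h$) and the rate $r(\sfx,\theta^{ij}\sfx)=g_i(x_i)$ is positive, then necessarily $1\le x_i\le\tmax_i$ and $x_j<\tmax_j$; by assumption \eqref{uiz} these ensure that $\nu_i(x_i),\nu_i(x_i-1),\nu_j(x_j),\nu_j(x_j+1)$ are all strictly positive, so $g_i$ and $g_j$ are well-defined at the relevant arguments and $\theta^{ij}\sfx$ also lies in the support of $\mu_n(\cdot|k)$. Using definition \eqref{ginu1},
\[
\mu_n(\sfx|k)\,g_i(x_i)
\;=\;\frac{1}{\pi_n(k)}\,\Bigl(\sprod_h \nu_h(x_h)\Bigr)\,\frac{\nu_i(x_i-1)}{\nu_i(x_i)}
\;=\;\frac{1}{\pi_n(k)}\,\nu_i(x_i-1)\nu_j(x_j)\,\sprod_{h\neq i,j}\nu_h(x_h).
\]
The reverse rate is $r(\theta^{ij}\sfx,\sfx)=g_j((\theta^{ij}\sfx)_j)=g_j(x_j+1)=\nu_j(x_j)/\nu_j(x_j+1)$, and the weight of $\theta^{ij}\sfx$ is obtained from that of $\sfx$ by replacing $\nu_i(x_i)\nu_j(x_j)$ with $\nu_i(x_i-1)\nu_j(x_j+1)$. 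Multiplying gives
\[
\mu_n(\theta^{ij}\sfx|k)\,g_j(x_j+1)
\;=\;\frac{1}{\pi_n(k)}\,\nu_i(x_i-1)\nu_j(x_j+1)\,\sprod_{h\neq i,j}\nu_h(x_h)\,\cdot\,\frac{\nu_j(x_j)}{\nu_j(x_j+1)},
\]
and the $\nu_j(x_j+1)$ factors cancel, giving the same expression as before. Hence detailed balance holds pair by pair, so $\mu_n(\cdot|k)$ is reversible.

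There is essentially no obstacle: the statement is a direct computation and the only subtle point is bookkeeping the support, which is precisely why \eqref{uiz} is imposed (without it some ratio in \eqref{ginu1} would be of the form $0/0$ or the reverse jump would lead outside the support). Log-concavity is not used anywhere in this lemma; it will only enter in the next step, where the monotonicity of $g_i$ required by Lemma \ref{mon} follows from inequality \eqref{a1}.
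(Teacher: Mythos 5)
Your proof is correct and takes exactly the same approach as the paper: write out the detailed balance equation $\mu_n(\sfx|k)\,g_i(x_i)=\mu_n(\theta^{ij}\sfx|k)\,g_j(x_j+1)$, note that the partition function $\pi_n(k)$ cancels because $\theta^{ij}$ conserves mass, and verify the identity by direct cancellation of the product weights. Your additional bookkeeping of the support (using \eqref{uiz} to ensure the relevant $\nu$-values are positive) and the observation that log-concavity is not needed here are both consistent with the paper's treatment, which leaves these points implicit.
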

\begin{proof}
Let $\sfx\in\cX_n$ with sum  $x_1+\dots+x_n=k$ and denote $\sfz:= \theta^{ij}\sfx$. It suffices to show that for each $k$, $\mu_n(\cdott|k)$ satisfies the detailed balance equations,
  \begin{align}
    \label{db3}
    \mu_n(\sfx|k) \, g_i(x_i) =  \mu_n(\sfz|k) \, g_j(z_j).
  \end{align}
  Let $x_i=x+1$ and $x_j=y$, so that $z_i= x$, $z_j=y+1$.
Recalling \eqref{can} and  \eqref{ginu1}, \eqref{db3} is equivalent to the identity
\begin{align*}
  &\frac{\bigl(\sprod_{h\notin\{ i,j\}}\nu_h(x)\bigr)\, \nu_i(x+1)\,\nu_j(y)}{\pi_n(k)}\,\frac{\nu_i(x)}{\nu_i(x+1)} \\
   &\quad
    =
  \frac{\bigl(\sprod_{h\notin\{ i,j\}}\nu_h(x)\bigr)\,  \nu_i(x)\,\nu_j(y+1)}{\pi_n(k)}\,\frac{\nu_j(y)}{\nu_j(y+1)}.%\qedhere
\end{align*}
\end{proof}

\begin{proof}[of Theorem \ref{lem16}]
Log-concavity of $\nu_i$ implies that the condition  \eqref{uiz} holds. Then, by Lemma \ref{rev4}, the measure $\mu(\cdott|k)$ is reversible for the process with rates
\begin{align*}
  g_i(y)=\frac{\nu_j(y-1)}{\nu_j(y)}\one\{y\le\tmax_i\}.
\end{align*}
By log-concavity, each $g_i$ is non decreasing in the range $[0,\max_i]$, which in turn implies that there is a monotone coupling with rates~$\tr$, by Lemma \ref{mon}.

Let $k<k'$ and $\tsfX(t):=(\sfX(t),\sfX'(t))$ be a coupling of zero range processes starting with $k$ and $k'$ particles, respectively. Let $A'$ be the event $\{X'_1(0)=k'$ and $X'_j(0)=0$ for all $j\neq1\}$, ``all particles of the $\sfX'$-th marginal are at site $1$'', and similarly $A$ for the $\sfX$ marginal. Denote $\tA:= A\cap A'$, the event ``all particles of both marginals are at site $1$''. Denote the return times to those events by
  \begin{align*}
    %%%\label{T1}
    \tT&:= \inf\bigl\{t>0:\tsfX(t)\in \tA\text{ and }\tsfX(t-)\notin \tA\bigr \},\\
   T'&:=   \inf\bigl\{t>0:\sfX'(t)\in A'\text{ and }\sfX'(t-)\notin A'\bigr \},
  \end{align*}
Since the coupling is monotone, we have $\tT=T'$ which, using the argument in Lemma \ref{lem15}, shows that the coupling $\tsfX(t)$ has a unique invariant measure $\tmu((\cdot,\cdot)\,|\,k,k')$ with marginals $\mu(\cdott|k)$ and $\mu(\cdott|k')$ concentrating on the set of ordered marginals, $\tmu_n(\sfx\le\sfx'|k,k') =1$, 
that is, $\mu_n(\cdott|k)\; \prec \;\mu_n(\cdott|k')$.
\end{proof}

\section*{Acknowledgments}

We thank support by the Erasmus grant and Neuromat-FAPESP CEPID USP. We thank Milton Jara and Davide Gabrielli for enlightening discussions, and Senya Sloshman and Erwin Bolthausen for mentioning references about the Markovian case. We thank the reviewer for a careful reading and relevant comments.

\bibliographystyle{plain}

\bibliography{largedev}

\end{document}